\documentclass[
preprint
]{elsarticle}

\usepackage{geometry}
\usepackage{hyperref} 
\usepackage{graphicx}
\usepackage{amsthm}
\usepackage{amsmath}
\usepackage{amssymb}
\usepackage{dsfont}
\usepackage{rotating}
\usepackage{lineno}
\usepackage{subfigmat}
\usepackage{float}
\usepackage[active]{srcltx}

\setlength{\unitlength}{1cm}

\def\Z{\mathbb{Z}} \def\N{\mathbb{N}} 
\def\cm{\mathcal{M}} 
\def\fg{\mathfrak{g}} 
\def\ds{\displaystyle}

\def\Z{\mathds{Z}} 
\def\N{\mathds{N}}

\newtheorem{corollary}[section]{Corollary}
\newtheorem{lemma}{Lemma}
\newtheorem{proposition}{Proposition}

\newtheorem{example}{Example}

\journal{Arxiv} 

\begin{document}

\begin{frontmatter}
\title{A note on quadrangular embedding of Abelian Cayley Graphs\tnoteref{label1}}
\tnotetext[label1]{Partially supported by  FAPESP (Grants 
2007/56052-8, 2007/00514-3 and 2011/01096-6) and CNPq (Grants 09561/2009-4)}

\author[FCA]{{J.~E.~Strapasson}\corref{cor1}}
\ead{joao.strapasson@fca.unicamp.br}

\author[UNICAMP]{{S.~I.~R. Costa}}
\ead{sueli@ime.unicamp.br}

\author[UFPr]{{M.~Muniz}}
\ead{marcelo@mat.ufpr.br}

\cortext[cor1]{Corresponding Author}
\address[UNICAMP]{Institute of Mathematics, University of Campinas, 13083-970, Campinas, S\~ao Paulo, Brazil.}
\address[FCA]{School of Applied Sciences, University of Campinas, 13484-350,
	Limeira, S\~ao Paulo, Brazil.}
\address[UFPR]{Mathematics Department, Federal University of Paran\'a,
Curitiba, Paran\'a, Brazil}


\begin{abstract}
\noindent The genus graphs have been studied by many authors, but just a few results concerning in special cases: Planar, Toroidal, Complete, Bipartite and Cartesian Product of Bipartite. We present here a derive general lower bound for the genus of a abelian Cayley graph and construct a family of circulant graphs which reach this bound. 
\end{abstract}

\begin{keyword}
 Abelian Cayley Graphs \sep Genus of a graph \sep Flat torus, Tessellations.

\end{keyword}

\end{frontmatter}

\section{Introduction}

The genus of a graph, defined as the minimum genus of a $2$-dimensional
surface on which this graph can be embedded without crossings (\cite%
{GroTuc,Trud}), is well known as being an important measure of the graph
complexity and it is related to other invariants.

A circulant graph, $C_{n}(a_{1},\dots,a_{k}),$ is an homogeneous graph which
can be represented (with crossings) by $n$ vertices on a circle, with two
vertices being connected if only if there is jump of $a_i$ vertices from one
to the other (Figure \ref{cir_13_1_6}). A circulant graph is particulary case of abelian Cayley graph. Different aspects of circulant
graphs have been studied lately, either theoretically or through their
applications in telecommunication networks and distributed computation \cite{LinYangLuHao,Matthew,Lisko,Muzy,Felix,Heub03}.

Concerning specifically to the genus of circulant graphs few results are
known up to now. We quote \cite{Boesch} for a small class of toroidal (genus
one) circulant graphs, \cite{Heub03} which establish a complete
classification of planar circulant graphs, \cite{grande} which establish a complete
classification of toroidal circulant graphs, and the cases where the circulant
graph is either complete or a bipartite complete graph (\cite%
{BeHA65,Hara,Ring65a,Ring65b,RiYo68}).

In \cite{Costa07} we show how any circulant graph can be viewed as a quotient of lattices and obtain as consequences that: i) for $k=2$, any circulant graph must be either genus one or zero (planar graph) and ii) for $k=3$, there are circulant graphs of arbitrarily high genus.

We present here a 
derive a general lower bound for the genus of abelian Cayley graph $C_{n}(a_{1},\dots,a_{k})$ as $\ds \frac{(k-2)\,n+4}{4}$, (Proposition \ref{limitante}), and construct a family of abelian Cayley graphs which reach this bound (Corollary \ref{circuquad}).

This note is organized as follows. In Section 2 we introduce concepts and previous results concerning circulant graphs, abelian Cayley graphs and genus. 
In Section 3 we derive a lower bound for the genus of an $n$-circulant graphs of order $2\,k$ (Proposition \ref{limitante}) and construct families of graphs reaching this bound for arbitrarily $k$ (Corollary \ref{circuquad}).

\section{Notation and Previous Results}

In this section we recall concepts and results used in this paper concerning acirculant graphs and fix the notations.

Let $G=(\{e=g_1, \dots g_n\},+)$ be a finite abelian group.  Given a subset $S=\{a_1,\dots, a_k\}$ of $G$, the associated \emph{Cayley graph} $(G,S)$ is an undirected graph whose vertices are the elements of $G$, and where two vertices $g_i$ and $g_j$ are connected if and only if $g_i-g_j=\pm a_l$ for some $a_l \in S$. We remark that $(G,S)$ is connected if and only if $S$  generates $G$ as a group, and that this graph is $2k$-regular if $a_i+a_i\neq 0, \forall i=1,2,\dots k,$ and $2k-l$-regular otherwise, where $l$ is a number of $a_i$ such that $a_i+a_i = 0$.

A \emph{circulant graph $C_{n}(a_{1},\dots,a_{k})$} with $n$ vertices $v_{0},\dots v_{n-1}$ and jumps $a_{1},\dots,a_{k}$, $0<a_{j}\leqslant\lfloor n/2\rfloor$, $a_{i}\neq a_{j}$, is an undirected graph such that each vertex $v_{j},0\leqslant j\leqslant n-1$, is adjacent to all the vertices $v_{j\pm a_{i}\mod n}$, for $1\leqslant i\leqslant k$. A circulant graph is homogeneous: any vertex has the same order (number of
incident edges), with is $2\,k$ except when $a_{j}=\frac{n}{2}$ for some $j$, when the order is $2\,k-1$, a circulant graphs is particulary case of abelian Cayley graph $(G=\Z_n,S=\{a_1, \dots ,a_k\})$.

The $n$-cyclic graph and the complete graph of $n$ vertices are examples of circulant graphs denoted by $C_{n}(1)$ and $C_{n}(1,\dots,\lfloor n/2\rfloor)$, respectively. Figure \ref{cir_13_1_6} shows on the left the standard picture of the circulant graph $C_{13}(1,6)$. 

\begin{figure}[H]
\hfill \includegraphics[scale=0.33]{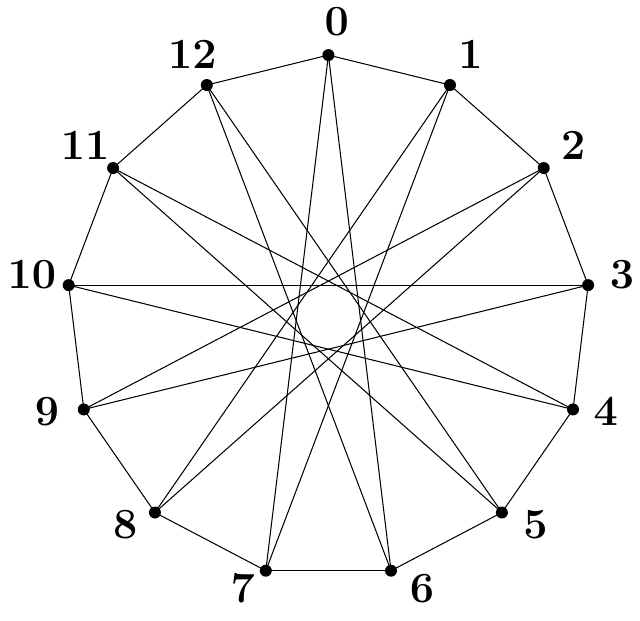}
\hfill \includegraphics[scale=0.33]{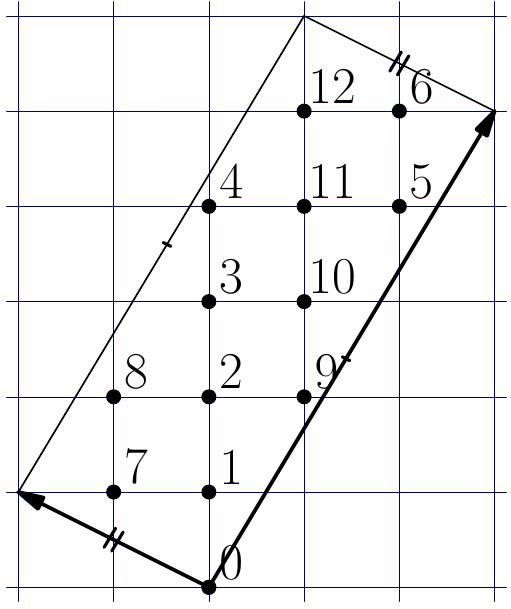} \hfill \,
\caption{The circulant graph $C_{13}(1,6)$ represented in the standard form (left) and on a 2-dimensional flat torus (right). }
\label{cir_13_1_6}
\end{figure}

In what follows we write $(a_{1},\dots,a_{k})=(\tilde{a}_{1},\dots,\tilde{a}_{k})\mod n$ to indicate that for each $i$, there is $j$ such that $a_{i}=\pm\tilde{a}_{j}\mod n$. Two circulant graphs, $C_{n}(a_{1},\dots,a_{k})$ and $C_{n}(\tilde{a}_{1},\dots,\tilde{a}_{k})$ are said to satisfy the \emph{\'{A}d\'{a}m's relation} if there is $r$, with $\gcd(r,n)=1$, such that

\begin{equation}  \label{adam}
(a_{1},\dots, a_{k})=r\,(\tilde a_{1},\dots, \tilde a_{k}) \mod n
\end{equation}

An important result concerning circulant graphs isomorphisms is that circulant graphs satisfying the \'{A}d\'{a}m's relation are isomorphic (\cite{Adam}). The reciprocal of this statement was also conjectured by \'{A}d\'{a}m. It is false for general circulant graphs but it is true in special cases such as $k=2$ or $n=p$ or $n=pq$ ($p$ and $q$ prime) (see \cite{Lisko,Alspach}). In this paper we will not distinguish between isomorphic graphs.

Without loss of generality we will always consider $a_1<\dots <a_k\leqslant
n/2$ for a circulant graph $C_{n}(a_{1},\dots,a_{k})$.

A circulant graph $C_{n}(a_{1}, \dots, a_{k})$ is connected if, and only if,  $\gcd(a_{1}, \dots, a_{k},n)\allowbreak=1$ (\cite{Boesch}). \textit{In this paper we just consider connected circulant graphs}.

The genus of a graph is defined as the minimum genus, $\fg$, of a $2$-dimensional orientable compact surface $\cm_{\fg}$ on which this graph can be embedded without crossings (\cite{GroTuc,Trud}). This number, besides being a measure of the graph complexity, is related to other invariants.

A graph $E$ is an \textit{expansion} of $H$ if it is constructed from $H$ by possibly adding new vertices on the edges of $H$. Finally, if there is an expantion $E $ of $H$ which is a subgraph of $G$ we say $G$ is \textit{supergraph} of $H$. From this definition follows that 

$$\mbox{\textit{if $G$ is a supergraph of $H$, genus($G$) $\geqslant$
genus($H$).}}   \label{supergraph}$$

When a connected graph $G$ is embedded on a surface, $\cm_\fg$, of minimum genus $\fg$ it splits the surface in regions called \textit{faces}, each one homeomorphic to an open disc surrounded by the graph edges, giving rise to a tessellation on this surface. Denoting the number of faces, edges and vertices by $f$, $e$, and $v$ respectively, those numbers must satisfy the well known Euler's second relation:

\begin{equation}
v+f-e=2-2\,\fg   \label{euler}
\end{equation}

We quote next other known relations those numbers must satisfy (\cite%
{GroTuc,Trud}):

If $G$ is a graph of genus $\fg$ with $v\geqslant l$ such that any face in $\cm_\fg$ has at
least $l$ sides in its boundary,

\begin{equation}
l\,f\leqslant2\,e \text{ and } \fg\geqslant\dfrac{l-2}{2\,l}\,e-\dfrac{1}{2}\,(v-2).   \label{liminfgenus}
\end{equation}

In the above expressions we have equalities if, only if, all the faces have $%
l$ sides.

An upper bound for the genus of a connected graph of $n$ vertices is given
by the genus of the \textit{complete graph}, $C_{n}(1,\dots,\lfloor
n/2\rfloor)$, which is $\left\lceil \frac{(n-3)(n-4)}{12}\right\rceil $.
Combining the lower bound above with a minimum of three edges
for each face, we can write the following inequality, for $n\geqslant3$:

\begin{equation}
\left\lceil \frac{1}{6}e-\frac{1}{2}(n-2)\right\rceil \leqslant
\fg\leqslant\left\lceil \frac{(n-3)(n-4)}{12}\right\rceil ,
\label{limitantetrudeau}
\end{equation}
where $\left\lceil x \right\rceil$ is the ceiling (smallest integer which is greater or equal to) of $x$.

For a circulant graph $C_{n}(a_{1}, \dots, a_{k})$, $a_{1}< a_{2}< \dots<
a_{k}$ we can replace $e$ by $e=n\, k$ when $a_k<\frac n 2$, or $e=
n\,(2k-1)/2$ when $a_k=\frac n 2$. We can then rewrite the lower bound in
last expression as $\left\lceil \frac{n}{6}(k-3)+1 \right\rceil $ or $%
\left\lceil \frac{n}{6}(k-4)+1 \right\rceil $, respectively.

\subsection{Previous results on genus of circulant graphs and abelian Cayley graphs}

\begin{itemize}
\item \emph{\textbf{Theorem}(Ringel, Beineke and Harary, 1965 \cite{Ring65c,BeHA65})
The genus of the $n$-cube graph $Q_n$ is $1+2^{n-3}(n-4)$.}

\item \emph{\textbf{Theorem}(Ringel, 1965 \cite{Ring65a,Ring65b})
The genus of the \textit{complete bipartite graph} $K_{m,n}$ is $\left\lceil \frac{(m-2)(n-2)}{4}\right\rceil $.}
Since $K_{n,n}$ is the circulant graph $C_{2\,n}(1,3,\dots,2\left\lceil \frac{n-1}{2}\right\rceil -1)$, the genus of this one-parameter family is $\left\lceil \frac{(n-2)^{2}}{4}\right\rceil$.

\item \emph{\textbf{Theorem}(White, 1970 \cite{Whit70})
Let $G=C_{m_1}\square C_{m_2} \cdots \square C_{m_r}$, where $C_{m_i}$ is even cycle, $r>1$ and $m_i>3$ for all $i$. Then the genus of $G$ is $1 + v(G)(r - 2)/4$}

\item \emph{\textbf{Theorem}(Pisanski, 1980 \cite{Pisa80})
Let $G$ and $H$ be connected $r$-regular bipartites graphs. Then Cartesian product $G\square H$ of $G$ and $H$ has genus $1+pm(r-2)/4$ where $p$ and $m$ are the number of vertices of $G$ and $H$, respectively.}

\item \emph{\textbf{Theorem}(Heuberger, 2003 \cite{Heub03}\label{genus0}) 
A planar circulant graph is either the graph $C_{n}(1)$, or $C_{n}(a_{1}, a_{2})$, where i) $a_{2}=\pm2\,a_{1} \mod n$ and $2\vert n$, ii) $a_{2}=n/2$, and $2\vert a_{2}$.}

\item For $k=2,$ and general $(a_{1},a_{2}),$ we have shown that circulant
graphs $C_{n}(a_{1},a_{2})$ are very far for from reaching the upper bound
for the genus given in \eqref{limitantetrudeau}, as it was shown in \cite{Costa07}:

\begin{proposition}[\cite{Costa07} \label{genus1}]
Any circulant graph $C_{n}(a_{1},a_{2})$, $a_{1}<a_{2}\leqslant n/2$, has genus one, except for the cases of planar graphs: i) $a_{2}=\pm2\,a_{1} \mod n,$ and $2|n$, ii) $a_{2}=n/2$, and $2|a_{2}$.
\end{proposition}

\item For $k=3$ and $n\neq2\,a_{3}$ we can assert that the genus of $%
C_{n}(a_{1},a_{2},a_{3})$ satisfies:

\begin{equation}
1\leqslant \fg\leqslant\left\lceil \frac{(n-3)(n-4)}{12}\right\rceil
\end{equation}

The genus of the complete graph $C_{7}(1,2,3)$ achieves the minimum value
one \eqref{limitantetrudeau}. However, in opposition to the case $k=2$, the
genus of a circulant graph $C_{n}(a_{1},a_{2},a_{3})$ can be arbitrarily
high:

\begin{proposition}[\cite{Costa07}]\label{genushigh} 
There are circulant graphs $C_{n}(a_{1},a_{2},a_{3})$ of arbitrarily high genus. A family of such graphs is given by: $n=(2\,m+1)\,(2\,m+2)\,(2\,m+3),\ m\geqslant2;$ $a_{1}=(2\,m+2)\,(2\,m+3),$ $a_{2}=(2\,m+1)(2\,m+2)\,(m+1),$ $a_{3}=(2\,m+2)\,(2\,m+3)\,(m+1),$ with the correspondent genus satisfying 

\begin{equation}
\fg\geqslant2\,m\,(m+1)^{2}+1.
\end{equation}
\end{proposition}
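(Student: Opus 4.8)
The plan is to bound the genus from below with the Euler-type inequality \eqref{liminfgenus}; the decisive feature of this family is that the graphs contain no triangle, so that inequality may be used with $l=4$. Accordingly there are two parts. First I would prove that $C_{n}(a_{1},a_{2},a_{3})$ with the given parameters is triangle-free; then I would insert $l=4$, $v=n$ and $e=3n$ into \eqref{liminfgenus} and compare the resulting bound with $2m(m+1)^{2}+1$.

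For triangle-freeness, observe that a triangle of a circulant graph $C_{n}(a_{1},a_{2},a_{3})$ may, after a rotation, be taken to have vertices $0,\,s_{1},\,s_{1}+s_{2}$ with each $s_{t}\in\{\pm a_{1},\pm a_{2},\pm a_{3}\}$ and $s_{1}+s_{2}+s_{3}\equiv 0\pmod n$; conversely, any such relation yields a genuine triangle, since $0<a_{i}<n$ forces the three vertices $0,\,s_{1},\,-s_{3}$ to be pairwise distinct. Hence it is enough to show that $\varepsilon_{1}b_{1}+\varepsilon_{2}b_{2}+\varepsilon_{3}b_{3}\not\equiv 0\pmod n$ for all $b_{t}\in\{a_{1},a_{2},a_{3}\}$ and all signs $\varepsilon_{t}\in\{\pm 1\}$. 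Using $n=(2m+1)(2m+2)(2m+3)$ together with the explicit factorizations of $a_{1},a_{2},a_{3}$, this reduces to a finite check: for most sign/index patterns the combination already lies strictly between $-n$ and $n$ and is visibly nonzero, and for the rest one reduces modulo a suitable divisor of $n$ (a factor such as $2m+1$ or $2m+3$), which kills the terms divisible by it and leaves a congruence that is impossible for $m\geqslant 2$. This enumeration of patterns and choice of modulus is where the real work lies, and it is the step I expect to be the main obstacle: it is wholly elementary but sensitive to the exact arithmetic of the chosen jumps.

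For the genus estimate, the graph is connected and vertex-transitive, hence $2$-connected, so in a minimum-genus embedding every facial walk is a cycle, and by the previous part every face has at least $4$ sides. Also $v=n$, and since $a_{3}<n/2$ each of the three jumps contributes $n$ edges, so $e=3n$. As $v=n\geqslant 4$, \eqref{liminfgenus} with $l=4$ gives
\[
g\;\geqslant\;\frac{4-2}{2\cdot 4}\,e-\frac12\,(v-2)\;=\;\frac{3n}{4}-\frac{n-2}{2}\;=\;\frac{n}{4}+1 .
\]
It remains to note that
\[
(2m+1)(2m+2)(2m+3)=2(m+1)\big(4m^{2}+8m+3\big)\;\geqslant\;2(m+1)\cdot 4m(m+1)=8m(m+1)^{2},
\]
whence $g\geqslant \frac{n}{4}+1\geqslant 2m(m+1)^{2}+1$. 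Since this lower bound grows without bound as $m\to\infty$, the family consists of circulant graphs of arbitrarily high genus, as claimed.
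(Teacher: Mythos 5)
Your argument hinges entirely on the claim that these graphs are triangle-free, and that claim is false. The jumps satisfy the exact identity
\[
2\,a_{3}-a_{1}=(2m+2)(2m+3)\bigl(2(m+1)-1\bigr)=(2m+1)(2m+2)(2m+3)=n,
\]
so $2\,a_{3}\equiv a_{1}\pmod n$ and $\{0,\,a_{3},\,2a_{3}\}$ is a genuine triangle: its edges use the jumps $a_{3},a_{3},a_{1}$ and its vertices $0,a_{3},a_{1}$ are pairwise distinct. For $m=2$ this is concrete: $n=210$, $a_{3}=126$, $2a_{3}=252\equiv 42=a_{1}$. Hence \eqref{liminfgenus} cannot be invoked with $l=4$, the bound $g\geqslant n/4+1$ does not follow, and the proof collapses. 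This is exactly the step you yourself flagged as ``where the real work lies'' and left unverified; carrying out the finite check refutes the premise rather than confirming it, and the family in fact contains $n$ such triangles $\{x,x+a_{3},x+2a_{3}\}$.

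There are secondary problems as well. First, $\gcd(n,a_{1},a_{2},a_{3})=2m+2>1$, so the graph is disconnected, with $2m+2$ components each isomorphic to $C_{(2m+1)(2m+3)}\bigl(2m+3,\,(2m+1)(m+1),\,(2m+3)(m+1)\bigr)$; your assertion ``connected and vertex-transitive, hence $2$-connected'' is therefore false, and \eqref{euler} and \eqref{liminfgenus}, which are stated for connected graphs, would have to be applied component by component and combined via additivity of genus. (The triangle relation survives this reduction, since $2a_{3}'\equiv a_{1}'$ modulo $(2m+1)(2m+3)$ in each component.) Second, as written $a_{3}>n/2$ (its reduced representative is $m(2m+2)(2m+3)$), so the justification ``$a_{3}<n/2$, hence $e=3n$'' is incorrect, although the conclusion $e=3n$ does hold because no jump is congruent to $n/2$ and no two jumps agree up to sign. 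Note that the paper does not reprove this proposition, which is quoted from \cite{Costa07}; but whatever the correct argument is, it must deal with the triangles and the disconnectedness, so a lower bound based purely on quadrilateral faces of the whole graph is not available here.
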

\end{itemize}

In the next section we deal with the more general class of abelian Cayley graphs and establish a lower bound for their genus.

\section{Quadrangular embedding of abelian Cayley graphs}
\label{quadri} 
In this section we consider Cayley graphs of abelian groups, a more general class of graphs of which circulant graphs form a very particular subclass, the Cayley graphs of cyclic groups. Nevertheless, an important feature of circulant graphs, their embeddings in $k$-dimensional tori, is  shared by the whole class of Cayley graphs of abelian groups.  We will see in the following that $k$ is associated to the number of elements of the generating set of the edges of the Cayley graph. We will determine a subclass of these graphs that has quadrangular embeddings, and hence a subclass where we know the genus of each graph.


It is known that graphs that have $3$-cycles may have embeddings with triangular faces, and some easy calculations establish a lower bound for the genus. In general we can also establish a lower bound that depends on the girth $l$ of the graph. If $(G,S)$ is a Cayley graph and there are no solutions of $a_{h}=\pm (a_{i}\pm a_{j})$ for $h,i,j \in \{1,2, \dots k\}$ (not necessarily distinct), the girth is always $4$ (a typical $4$-cycle is $0,a_i,a_i+a_j,a_j,0$), which implies at least four edges for each face.

If the graph is $2 k$-regular, then

\begin{equation*}
\fg\geqslant\frac{l-2}{2\,l}\,a-\frac{v-2}{2}=\frac{2}{8}\,n\,k-\frac{n-2}{2}=%
\frac{n\,k-2\,n+4}{4}.
\end{equation*}

Hence, we get the following lemma, which establishes a lower bound for circulant graphs with no triangular faces.

\begin{lemma} \label{limitante} 
The genus, $\fg$ of the circulant graph $C_{n}(a_{1},\dots,a_{k}),$ such that $a_{i}\neq a_{j}+a_{l},\ \forall\,i,j,l\leqslant k$ and $n\neq2\,a_{i},\ \forall i$ satisfy:

\begin{equation*}
\fg\geqslant\frac{n\,k-2\,n+4}{4}.
\end{equation*}
\end{lemma}

In what follows the (additive) subgroup of $G=\Z_{n_1} \times \Z_{n_2} \times \cdots \times  \Z_{n_l}$ generated by $a_1, \dots , a_k \in G$ is denoted by 
$\langle a_1, \dots , a_k \rangle$, and let $G_s = \langle a_1,a_2, \ldots,
a_{s}\rangle \triangleleft G$. Define $L_s$ for $s=1, \ldots, k$ by $L_1= o(a_1)$ is the order of $a_1$ in $G$ and $L_s = o(a_s +G_{s-1})=[G_s:G_{s-1}]$ if $1< s \leqslant k$, is the order of the class $a_s +G_{s-1}$ in $G/G_{s-1}$.

Under the above conditions we can assert that $x\in G$ can be expressed uniquely as a linear combination, $x=m_1 a_1+ m_2 a_2 + \dots m_k a_k$, where $0\leqslant m_i<L_i$. This fact is stated next a lemma. 

\begin{lemma}\label{decomp}
Given $x \in G$ and $G_s  \triangleleft G, 1 \leqslant s<k$ then there exist unique $m_i \in \N$ and $R_{s,x}$ such that 
\begin{equation*}
x = m_1\,a_1 + \cdots + m_s \,a_s +R_{s,x} \text{ and } R_{s,x}=R(m_{s+1},\dots ,m_{k}) = m_{s+1} a_{s+1} + \cdots m_k a_k
\end{equation*}
with $0 \leqslant m_i < L_i$ for all $i$. 
\end{lemma}


Through Lemma \ref{decomp} we can show that not only circulant graph \cite{Costa07} but any Cayley graph of an abelian group can be embedded in a $k$-dimensional torus. The construction of such embedding, for $k=2$, is illustrade in Figure \ref{cayleytoro}.

We consider the mapping:

\begin{eqnarray}
\begin{matrix}
\varphi: & \Z^k & \longrightarrow & G\\
 & (x_1, \dots ,x_k) & \longmapsto & x_1 a_1+\cdots x_k a_k.
\end{matrix}
\end{eqnarray}

Therefore $\dfrac{\Z^k}{\ker \varphi}\simeq G$ and $\ker \varphi$ is lattice. The Cayley graph associated to $G$ , as a quotient of lattices, is then naturally embedded in flat torus which a polytope generated by basis of this lattice, with the parallel faces identified.

To proceed in a uniform way we can use the standard Hermite basis for $\ker \varphi$, as it done for circulant graphs in \cite{Heub03}.

We remark that Hermite basis of $\ker \varphi$, $\{U_1, \dots , U_k\}$, is given as columns of a upper triangular matrix $(b_{i,j})_{k\times k}$, where $b_{i,i}=L_i$ and $0\leqslant b_{i,j}<L_i$. 

In figure \ref{cayleytoro}, we consider the Cayley graph of $G=\Z_2\times \Z_8$ and $a_1=(1,2)$ and $a_2=(0,1)$, therefore $o(a_1)=4$ and $o(a_2+\langle a_1\rangle)=4$ and Hermite basis is $\{(4,0),(2,4)\}$. 

\begin{figure}[H]
\begin{subfigmatrix}{2}
\subfigure[Flat torus]{\includegraphics[height=3.5cm]{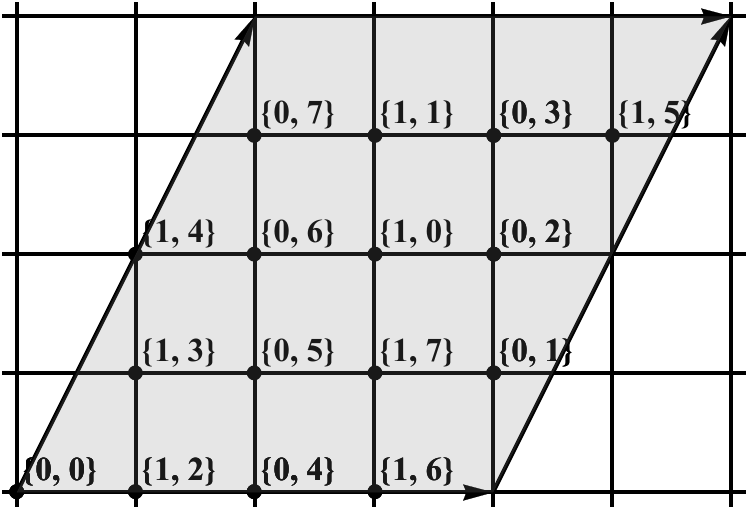}} 
\subfigure[Torus]{\includegraphics[height=3.5cm]{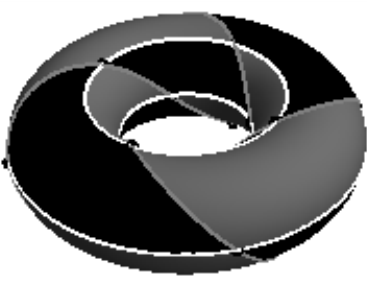}}
\end{subfigmatrix}
\caption{$2$-embedding of Cayley graph $(\Z_2\times \Z_8,\{(1,2),(0,1)\})$.}
\label{cayleytoro}
\end{figure}

%

We will construct the embedding of cayley graphs $(G,S)$ by induction on $k=\#S$. Note that $(G,S-\{a_k\})$ may be disconnected: it is
well known that this graph has $d$ components, where $L_k =[G=G_k:G_{k-1}]=o(a_k+G_{k-1})$, and that each component is isomorphic to Cayley graph $(G_{k-1},S-\{a_k\})$. Since $x$ and $y$ are linked by a path if and only if $x -y = m_1\,a_1 + \cdots + m_{k-1}\,a_{k-1}$ in $G$, it follows that $x$ and $y$ are in the same component if and only if $x \equiv y \in G_{k-1}$. Hence, each $0 \leqslant j_s < L_k$ determines a component and the numbers.

\begin{equation*}
R_{k,m_k} + m_1\,a_1 + \cdots + m_{k-1}\,a_{k-1} 
\end{equation*}
with $0 \leqslant m_k < L_k$ and $R_{k,m_k}$ is a fixed element this component, describe all the vertices of the component of $(G,S-{a_k})$ associated to $m_k$.

\begin{proposition}
\label{cayley} 
Let $(G,\{a_{1},\dots ,a_{k}\})$, where $n=\#G=2 l$, and $L_i=2 l_i, 1\leqslant i \leqslant k$, and $l_1>1$ and $a_{i}\neq \pm (a_j \pm a_h),\ 1\leqslant i,j,k < k$. Hence, the genus of $G$ is $\dfrac{n\, k-2\, n+4}{4}$.
\end{proposition}

\textit{Proof.} The proof will be done by induction on $k$. For $k=2$ it is trivial (vide Figure \ref{cayleytoro}). Assume that the result holds for $k-1$. The graph $(G,\{a_{1}, \dots, a_{k-1}\}$ is a disconnected Cayley graph
with an even number of connected components ($L_k=2l_k$), $H_{m_k}=R_{k,m_k}+G_{k-1}$, $0\leqslant m_k<2l_k$, where $2l_k = [G:G_{k-1}]$. Each component $H_{m_k}$ can be embedded on a
surface $S_{m_k}$ giving to a rise tessellation where every face has 4 edges. As in
figure \ref{exempinv}, we reverse the orientation of the components that
contain the odd multiples of $a_{k}$. We wish to add tubes that,
topologically, are prisms with squared bases.

\begin{figure}[H]
	\centering 		\includegraphics[height=4cm]{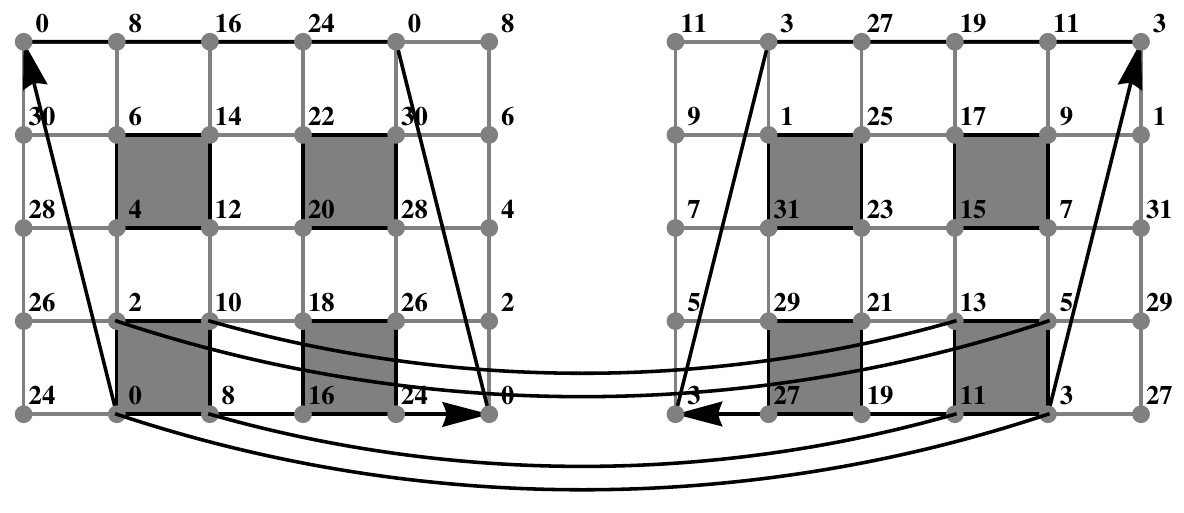} 
\caption{The construction of the $C_{32}(8,2,3)$ embedding. Tubes are added on the two connected components of $C_{32}(8,2)$ considered with reversal orientation.}%
\label{exempinv}%
\end{figure}

Let $x \in H_{m_k}$, then $x = m_k a_k + m_1 a_1 + \cdots + m_{k-1}a_{k-1}$
where $0 \leqslant m_k < 2l_k$ and $0 \leqslant m_i < 2 l_i$ as in Lemma \ref{decomp}.
Given $0<j< k-1$, we can also express

\begin{eqnarray}
x &=& m_k a_k + (2p_j + \delta_j)\,a_j + (2p_{k-1} + \delta_{k-1})\,a_{k-1}+ g \text{ and}\\
x &=& m_k a_k + (2q_j + \delta_j)\,a_j + ((2q_{k-1}+1) - \delta_{k-1})\,a_{k-1}+ g \ (q_j>0)
\end{eqnarray}
with $g=\ds \sum_{\substack{ i = 1, \ldots, k-2  \\ i \neq j}}m_i a_i$ and $\delta_j, \delta_{k-1} \in \{0,1\}$. 

Three possibilities should be considered:

i) Each vertex of $H_{m_k}$, $m_k$ even, is a vertex of a square determined by $\{P,P + a_{k-1},P + a_{k-1} + a_{j},P +a_{j}\}$, where $P$ is of the form

\begin{equation}  \label{formak}
P=m_k a_k + 2p_j \,a_j + 2p_{k-1}\,a_{k-1}+ g,
\end{equation}
where $0\leqslant 2\,p_{j} < L_{j}$, $2\,p_{k-1} < L_{k-1}$. Just as Figure \ref{exempinv}, each such square is then connected to the square $\{P+a_k,P +a_k+ a_{k-1},P +a_k+ a_{k-1} + a_{j},P +a_k+a_{j}\}$ (which lies on $H_{m_k+1}$) by a prism which contains the edges $[P,P+a_k], [P+a_{k-1},P+a_k+ a_{k-1}], [P+ a_{k-1} + a_{j},P+a_k+ a_{k-1} + a_{j}]$ and $[P+ a_{j},P+a_k+ a_{j}]$; and then we cut out both squares. Doing this for every $j \in \{1,2, \ldots, k-2\}$ we construct a surface where each edge of the form $[x,x+a_k]$ is embedded without crossings.

ii) Each vertex of $H_{m_k}$, $m_k$ odd and $m_k\neq 2l_k-1$, is a vertex of a square determined by $\{Q,Q + a_{k-1},Q + a_{k-1} + a_{j},Q +a_{j}\}$, where $Q$ is of the form 
\begin{equation}  \label{formak2} 
Q=m_k a_k + 2q_j \,a_j + (2q_{k-1}+1)\,a_{k-1}+ g, 
\end{equation}
where $2\leqslant 2q_{j} \leqslant L_{j}$, $2\,q_{k-1} < L_{k-1}$. We the same reasoning as above can be applied by replacing $P$ by $Q$.

iii) Each vertex, $x$ of $H_{2l_k-1}$, $x+a_k \in H_0$. This case requires special care, since we need to choose a face in $H_{2l_k-1}$ and another in $H_0$, once that some faces have been excluded. This choice depends on how $2 l_k a_k$ is described in $G_k$, since $2 l_k a_k=\tilde m_j a_j + \tilde m_{k-1} a_{k-1} +\tilde g$, $\tilde g =\ds \sum_{\substack{ i = 1, \ldots, k-2  \\ i \neq j}}\tilde m_i a_i$, choose $\tilde Q$ such that $m_j + \tilde m_j$, $m_{k-1}+\tilde m_{k-1}$ and $m_j$ are not both even, and $m_{k-1}$ is not odd. We then repeat the procedure of item (i), replacing $P$ by $\tilde Q$.

Therefore, under restrictions considered in this proposition we always can connect the excluded squares by prisms and construct a surface which is tessellated by $(G,S)$, and each face of this tessellation is a square. This last remark shows that this surface has the required genus, and this
concludes the proof. 
\qed

\begin{corollary}
\label{circuquad} 
Let $G=C_{n}(a_{1},\dots ,a_{k})$, where $n=2^r l$, $a_i=2^{r_i} l_i, i=1,\dots k-1$, where $l,l_i, a_k$ odd, $0<r_{i+1}<r_{i}<r$ and $a_{i+1}\neq \pm 2\,a_i,\ 1\leqslant i <k$. Hence, the genus of $G$ is $\dfrac{n\, k-2\, n+4}{4}$.
\end{corollary}

The next example shows that there are more circulant graphs than the ones considered in Proposition \ref{cayley} which also can be embedded giving rise to a quadrilateral tessellation.

\begin{example}
For the graph $C_{32}(8,2,3,7)$, if we consider $C_{32}(8,2,3)$ as in last proposition, we note that just half the faces of the tessellation of $C_{32}(8,2)$ are excluded to add tubes. We can also exclude the other faces adding tubes to support the edges $\pm a_4$. Hence this is an embedding generating quadrilateral faces and since there are no cycles of size 3, the expression $\dfrac{n\,k-2\,n+4}{4}$ for the genus still holds. 
\end{example}

Figure \ref{genus32} shows all the circulant graphs of 32 vertices for which the genus can be given by Proposition \ref{genus0} (Heuberger), \ref{genus1} and Corollary \ref{circuquad}

\begin{figure}[H]
\centering\begin{tabular}{@{\extracolsep{\fill}}||c||c|c|c|c|c||}
\cline{1-6}
 $k$ & $a_{1}\in$ & $a_{2} \in$ & $a_{3} \in$ & $a_{4} \in$ & $\fg$ \\ \cline{1-6}
 $1$ & $I$ &  &  &  & $0$ \\
 $2$ & $I$ & $2\,(I-\{\pm a_{1}\})$ &  &  & $1$ \\
 $2$ & $I$ & $4\,I$ & & & $1$ \\
 $2$ & $I$ & $8\,I$ & - & - & $1$ \\
 $3$ & $I$ & $2\,(I-\{\pm a_{1}\})$ & $4\,(I-\{\pm2\,a_{2}\})$ & - & $9$ \\
 $3$ & $I$ & $2\,(I-\{\pm a_{1}\})$ & $8\,I$ & - & $9$ \\
 $3$ & $I$ & $4\,I$ & $8\,(I-\{\pm2\,a_{2}\})$ & - & $9$ \\
 $4$ & $I$ & $2\,(I-\{\pm a_{1}\})$ & $4\,(I-\{\pm2\,a_{2}\})$ & $8\,(I-\{\pm 2\,a_{3}\})$ & $17$ \\ \cline{1-6}
\end{tabular}
\caption{All circulant graphs of 32 vertices satisfying Propositions \ref{genus0} (Heuberger), \newline \ref{genus1}  and \ref{circuquad} ($I=\{\pm1,\pm3, \dots, \pm15\}$).}%
\label{genus32}%
\end{figure}

We note that some graphs satisfying the hypotheses of the last proposition belong to the class of graphs with given genus. For some of those graphs we could have used the results of White and Pisanski (see \cite{Whit70,Pisa80}) to determine their genus namely. The particular class of Cartesian product of bipartite graphs which satisfy the proposition hypothesis. However, many of the graphs considered in the last proposition are not Cartesian product of bipartite graphs. 

For example the Cayley graphs $G_1=(\Z_2\times \Z_8, \{(1,2),(0,1)\})$ and $G_2=C_{16}(1,4)$.

We can assert the $G_1$ is not a product of de bipartite graphs, since as $G_1=G \square H $, a regular graph, to be such a product, each graph factor needed to be also a regular graph. There are few possibilities to be considered here: $\#G=2$ or $4$ and $\#H=8$ or $4$, respectively. That is, either i) $G=P_2$ and $H$ is a $3$-regular bipartite complete (hypothesis of White and Pisanski results), what is not possible since $H$ is bipartite it should be $4$-regular. or ii) $G=H=K_{2,2}$ what again cannot be, since $K_{2,2}\square K_{2,2}=(\Z_4\times \Z_4,\{(1,0),(0,1)\})$, which has spectrum $\{8,6,6,6,6,4,4,4,4,4,4,2,2,2,2,0\}$ and $G_1$ has spectrum $\{8,6,6,4+\sqrt{2},4+\sqrt{2},4+\sqrt{2},4+\sqrt{2}, 4,4,4-\sqrt{2},4-\sqrt{2},4-\sqrt{2},4-\sqrt{2},2,2,0\}$.

$G_2$ also is not a product of bipartite graphs since it has odd size cycles (ex: $0,1,2,3,4,0$) what not occur for bipartite graphs.

%

\end{document}